\newcommand*{\MRref}[2]{\linebreak[0] \href{http://www.ams.org/mathscinet-getitem?mr=#1}{MR \textbf{#1}}}
\newcommand*{\arxiv}[1]{\linebreak[0] \href{http://www.arxiv.org/abs/#1}{arXiv:#1}}
\tikzset{cd/.style=matrix of math nodes,row sep=2em,column sep=2em, text height=1.5ex, text depth=0.5ex}
\tikzset{cdar/.style=->,auto}
\tikzset{mid/.style={anchor=mid}} 
\tikzset{dar/.style={double,double equal sign distance,-implies}}
\tikzset{narrowfill/.style={inner sep=1pt, fill=white}}
\DeclareMathOperator{\Ho}{Ho}
\DeclareMathOperator{\Hom}{Hom}
\newcommand*{\KK}{\textup{KK}}
\newcommand*{\K}{\textup{K}}
\newcommand*{\E}{\textup{E}}
\newcommand*{\bu}{\textup{bu}}
\newcommand*{\Derived}{\textup{Der}}
\newcommand*{\Repring}{\textup{R}}
\newcommand*{\Rspc}{\mathbf{R}}
\newcommand*{\Kspc}{\mathbf{K}}
\newcommand*{\Aspc}{\mathbf{A}}
\newcommand*{\buspc}{\mathbf{bu}}
\newcommand*{\Cstarsep}{\mathfrak{C^*sep}}
\newcommand*{\Ecat}{\mathfrak{E}}
\newcommand*{\bucat}{\mathfrak{bu}}
\newcommand*{\Mod}[1]{\mathfrak{Mod}\bigl(#1\bigr)} 
\newcommand*{\C}{\mathbb{C}}
\newcommand*{\Z}{\mathbb{Z}}
\newcommand*{\N}{\mathbb{N}}
\newcommand*{\Repr}{\mathcal R}
\newcommand*{\nb}{\nobreakdash}  
\newcommand*{\Bootstrap}{{\mathcal B}}
\newcommand*{\SW}{{\mathcal{SW}}}
\newcommand*{\Model}{{\mathcal M}}
\newcommand*{\Cstar}{\texorpdfstring{$C^*$\nb-}{C*-}}
\newcommand*{\Star}{\texorpdfstring{$^*$\nb-}{*-}}
\newcommand*{\defeq}{\mathrel{\vcentcolon=}}
\newcommand*{\inOb}{\mathrel{\in\in}}
\theoremstyle{plain}
\numberwithin{equation}{section}
\theoremstyle{plain}
\newtheorem{theorem}[equation]{Theorem}
\newtheorem{lemma}[equation]{Lemma}
\newtheorem{proposition}[equation]{Proposition}
\theoremstyle{definition}
\newtheorem{definition}[equation]{Definition}
\theoremstyle{remark}
\newtheorem{remark}[equation]{Remark}
\title
{Homotopy-theoretic $\E$-theory and $n$-order}
\author[Rasmus Bentmann]{Rasmus Bentmann}
\address{Department of Mathematical Sciences\\University of
Copenhagen\\Universitetsparken~5\\2100 Copenhagen \O \\Denmark}
\email{bentmann@math.ku.dk}
\subjclass[2010]{18E30, 19K35, 46L80, 55P43}
\keywords{$n$-order, triangulated categories, $\E$-theory, ring spectra}
\thanks{The author was supported by the Danish National Research Foundation through the Centre for Symmetry and Deformation (DNRF92) and by the Marie Curie Research Training Network EU-NCG}
\begin{document}

\begin{abstract}
The bootstrap category in $\E$\nb-theory for \Cstar{}algebras over a finite space is embedded into the homotopy category of certain diagrams of $\Kspc$\nb-module spectra. Therefore it has infinite $n$\nb-order for every $n\in\N$. The same holds for the bootstrap category in $G$\nb-equivariant $\E$\nb-theory for a compact group~$G$ and for the Spanier--Whitehead category in connective $\E$\nb-theory.
\end{abstract}

\maketitle

\section{Introduction}

Triangulated categories arise in various contexts such as algebraic geometry, representation theory and algebraic topology. This motivates their distinction into \emph{algebraic}, \emph{topological} (and non-algebraic), and \emph{exotic} (that is, non-topological) triangulated categories; see~\cite{Schwede:Top_tri_cats}. Every algebraic triangulated category is topological. The converse is false; topological triangulated categories may exhibit certain torsion phenomena which cannot occur in algebraic triangulated categories. The most well-known such phenomenon is the fact that the endomorphism ring of the mod-$2$ Moore spectrum is not annihilated by multiplication by~$2$.

In~\cites{Schwede:Alg_vs_top,Schwede:Top_tri_cats}, Stefan Schwede introduced the notion of \emph{$n$\nb-order} for triangulated categories (a non-negative integer or infinity for every $n\in\N$). This is an invariant (up to triangulated equivalence) that can often be used to distinguish non-algebraic triangulated categories from algebraic ones by measuring the occurence of the afore-mentioned torsion phenomena: Schwede shows that the $n$\nb-order of every algebraic triangulated category is infinite for every $n\in\N$; on the other hand, he proves that, if~$p$ is a prime number, the Spanier--Whitehead category in stable homotopy theory has $p$\nb-order equal to~$p-1$.

One aim of this note is to determine the $n$\nb-order of certain triangulated categories arising in \Cstar{}algebra theory. More specifically, we are interested in the bivariant homology theories
\begin{itemize}
\item
connective $\E$\nb-theory for separable \Cstar{}algebras, denoted by~$\bu$, as defined by Thom in~\cite{Thom:Thesis},
\item
$G$\nb-equivariant $\E$\nb-theory for separable \Cstar{}algebras with a continuous action of a compact group~$G$ by \Star{}automorphisms, denoted by~$\E^G$, as defined by Guentner, Higson and Trout in~\cite{Guentner-Higson-Trout:Equivariant_E},
\item
ideal-related $\E$\nb-theory for separable \Cstar{}algebras over a finite space~$X$, denoted by~$\E(X)$, as defined by Dadarlat and Meyer in~\cite{Dadarlat-Meyer:E_over_space}.
\end{itemize}

These give rise to triangulated categories denoted by~$\bucat$, $\Ecat^G$ and~$\Ecat(X)$, respectively. The \emph{Spanier--Whitehead category} $\SW_\bu\subset\bucat$ in connective $\E$\nb-theory is the thick triangulated subcategory of~$\bucat$ generated by the \Cstar{}algebra~$\C$ of complex numbers. The \emph{bootstrap categories} $\Bootstrap_\E^G\subset\Ecat^G$ and~$\Bootstrap_\E(X)\subset\Ecat(X)$ are the $\aleph_0$\nb-localizing subcategories generated by the objects with one-dimensional underlying \Cstar{}algebra, respectively. (While there are no non-trivial $G$\nb-actions by \Star{}automorphisms on~$\C$, there are as many mutually non-isomorphic ways to turn~$\C$ into a \Cstar{}algebra over~$X$ as there are points in the space~$X$).

Our computational result is the following; it may be regarded as a generalization of Schochet's observation in \cite{Schochet:Top4}*{Proposition~2.4}, stating that $\K$-theory with coefficients in $\Z/n$ is annihilated by multiplication by~$n$.

\begin{theorem}
  \label{thm:main}
The triangulated categories~$\SW_\bu$, $\Bootstrap_\E^G$ and~$\Bootstrap_\E(X)$ have infinite $n$\nb-order for every $n\in\N$.
\end{theorem}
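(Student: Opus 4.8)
The plan is to realise each of the three categories as a full triangulated subcategory of the homotopy category of modules over a $\Kspc$\nb-algebra spectrum---in the equivariant case over a $\Kspc_G$\nb-algebra spectrum, in the connective case over a $\buspc$\nb-algebra spectrum---where $\Kspc$, $\Kspc_G$ and $\buspc$ denote the topological, the $G$\nb-equivariant, and the connective $\K$\nb-theory spectrum of the point. The argument then separates into two parts. First, a permanence statement: Schwede's $n$\nb-order obstruction classes lie in Hom-groups of the ambient triangulated category and are computed identically in a full triangulated subcategory, so infinite $n$\nb-order is inherited along such inclusions. Second, the topological input: the homotopy category of modules over a $\Kspc$\nb-algebra spectrum---and, more generally, the homotopy category of modules over any spectral category enriched in $\Kspc$\nb-modules, i.e.\ of suitable diagrams of $\Kspc$\nb-module spectra---has infinite $n$\nb-order for every $n\in\N$, and likewise with $\Kspc$ replaced by $\Kspc_G$ or $\buspc$.

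For the topological input the decisive feature is that $\pi_*\Kspc=\Z[\beta^{\pm 1}]$, $\pi_*\Kspc_G=\Repring(G)[\beta^{\pm 1}]$ and $\pi_*\buspc=\Z[\beta]$ are \emph{torsion-free}. Since the positive-degree stable homotopy groups of spheres are finite, the unit map from the sphere spectrum annihilates them in each of these homotopy rings; hence every positive-degree stable homotopy operation acts as zero on modules over such an algebra spectrum. Schwede's higher mod-$n$ Moore obstruction classes are, through the unit map, images of positive-degree classes originating in the stable stems---for the Spanier--Whitehead category of spectra these are the $\alpha$\nb-family and its relatives, which witness $p$\nb-order equal to $p-1$---so, lying in the image of a group that the $\Kspc$\nb-algebra structure kills, they vanish and every $n$\nb-order becomes infinite. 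Only the action of the central, torsion-free homotopy ring is used, so the conclusion is unaffected by passing to diagram categories (the incidence/spectral category involved is still enriched in $\Kspc$\nb-modules, even though its own homotopy ring need not be torsion-free or concentrated in even degrees).

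For the three embeddings I would use that $\Ecat(X)$, $\Ecat^G$ and $\bucat$ are homotopy categories of stable model categories possessing sets of compact generators: the bootstrap category $\Bootstrap_\E(X)$ is generated by the finitely many objects $\C_x$, $x\in X$, with one-dimensional underlying \Cstar{}algebra supported over a point of~$X$; $\Bootstrap_\E^G$ by the one-dimensional objects with $G$\nb-action; and $\SW_\bu$ by~$\C$. By Schwede--Shipley Morita theory each is triangle-equivalent to the homotopy category of modules over the spectral category formed by the endomorphisms of the chosen generators. Since $\E(X)(\C_x,\C_x)$, $\E^G(\C,\C)$ and $\bu(\C,\C)$ are $\Kspc$, $\Kspc_G$ and $\buspc$ respectively, and every morphism spectrum of these spectral categories is a bimodule over the relevant unit and hence a module over $\Kspc$ (resp.\ $\Kspc_G$, $\buspc$), these are categories of the shape handled by the topological input; alternatively, filtrated $\K$\nb-theory in the sense of Dadarlat--Meyer realises the $\Ecat(X)$\nb-case as diagrams of $\Kspc$\nb-module spectra indexed by the locally closed subsets of~$X$, with the odd-degree connecting maps recovered from the homotopy of the spectral category.

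The step I expect to be the main obstacle is making the topological input rigorous. Although positive-degree stable homotopy acts trivially on $\Kspc$\nb-modules, the obstruction classes themselves sit in low-degree homotopy groups of \emph{derived} endomorphism spectra of mod-$n$ Moore modules---for instance $\pi_1\mathrm{End}_{\Kspc}(\Kspc/n)\cong\Z/n$ is nonzero---so their vanishing is not formal. One must identify each obstruction class, at every level, precisely enough to see it vanish: the low levels because they are images, under the unit-induced exact functor from the Spanier--Whitehead category of spectra, of classes that the torsion-free ring $\pi_*\Kspc$ kills; the higher levels (which are not even visible in the Spanier--Whitehead category of spectra, since there the obstruction tower already terminates) because the mod-$n$ reductions $\Kspc/n$ and the Toda brackets governing them behave as they would in an algebraic triangulated category---and here the indeterminacy in the definition of the higher obstructions must be controlled, using at each stage that the requisite nullhomotopies can be chosen canonically. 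A secondary point is to verify that the Schwede--Shipley identification is genuinely available for $\E$\nb-theory over a finite space, for equivariant $\E$\nb-theory and for connective $\E$\nb-theory, and that the odd-degree connecting maps in the $\Ecat(X)$\nb-spectral category cause no trouble, which they do not, since the topological input uses only the $\Kspc$\nb-module structure and not evenness of the full homotopy ring.
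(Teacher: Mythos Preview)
Your overall plan---embed each category as a full triangulated subcategory of a derived category of modules over a ring spectrum whose base has torsion-free, even homotopy, then use that $n$\nb-order can only increase under passage to subcategories---is exactly the paper's strategy. The paper, however, treats your ``topological input'' as a black box: it simply cites the result of Schwede (based on Lawson and Angeltveit) recorded here as Theorem~\ref{thm:lawson_angeltveit}, that for a commutative symmetric ring spectrum~$\Rspc$ with $\pi_*\Rspc$ torsion-free and concentrated in even degrees, the derived category of any $\Rspc$\nb-algebra spectrum has infinite $n$\nb-order for all~$n$. You instead attempt to reprove this, and you correctly flag it as the main obstacle---because it is one. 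The heuristic ``positive-degree stable stems die under the unit map, hence the obstruction classes vanish'' breaks down precisely where you say it does: beyond the lowest levels the obstructions are not images of classes from the sphere, and controlling the higher Toda-type indeterminacies requires the structured-ring-spectrum input of Angeltveit (an $A_\infty$ analysis of the Postnikov tower, for which the \emph{evenness} hypothesis on $\pi_*\Rspc$ is genuinely used, not only torsion-freeness). Your sketch does not supply a substitute for that, so the argument is incomplete unless you invoke Theorem~\ref{thm:lawson_angeltveit}.

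There is a second, smaller gap in the embedding step. The paper does not assume that $\Ecat(X)$, $\Ecat^G$ or $\bucat$ arise as homotopy categories of stable model categories; it builds the embeddings explicitly, via Thom's functor $\Kspc^H\colon\bucat\to\Derived(\buspc)$, the Dell'Ambrogio--Emerson--Kandelaki--Meyer functor $\Kspc^G\colon\Ecat^G\to\Derived\bigl(\Kspc^G(\C)\bigr)$, and a diagram version $\Kspc^X\colon\Ecat(X)\to\Ho\bigl(\Mod{\Kspc}^X\bigr)\cong\Derived(\Kspc X)$. Your appeal to Schwede--Shipley Morita theory presupposes underlying model structures on the $\E$\nb-theoretic side that are not established in the literature; indeed, for $\bucat$ the paper explicitly remarks that it is unknown whether countable coproducts even exist, which is why it restricts to the thick subcategory~$\SW_\bu$. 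The explicit-functor route sidesteps this entirely, and once the embeddings are in place, the $\Kspc X$\nb-case falls under Theorem~\ref{thm:lawson_angeltveit} with $\Rspc=\Kspc$ and $\Aspc=\Kspc X$---so nothing about odd-degree maps in the incidence algebra needs separate justification.
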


The theorem is an application of the following result from \cite{Schwede:Top_tri_cats}*{Example~2.9}, which is based on results due to Tyler Lawson and to Vigleik Angeltveit~\cite{Angeltveit}.

\begin{theorem}
  \label{thm:lawson_angeltveit}
Let~$\Rspc$ be a commutative symmetric ring spectrum such that $\pi_*\Rspc$ is torsion-free and concentrated in even dimensions. Let~$\Aspc$ be an $\Rspc$\nb-algebra spectrum. Then the derived category of~$\Aspc$\nb-module spectra has infinite $n$\nb-order for every $n\in\N$.
\end{theorem}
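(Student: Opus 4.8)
The plan is to invoke Schwede's obstruction\nb-theoretic description of the $n$\nb-order and to show that, under the hypotheses on~$\Rspc$, every obstruction is the image of an odd\nb-degree stable homotopy class under the unit map, and therefore vanishes. Write $\Derived(\Aspc)$ for the derived category of $\Aspc$\nb-module spectra, $[-,-]_*$ for its graded morphism groups, and for an $\Aspc$\nb-module~$M$ let $M/n\defeq M\wedge_\Rspc(\Rspc/n)$ be its mod\nb-$n$ Moore object, where $\Rspc/n$ denotes the cofibre of $n\cdot\mathrm{id}_\Rspc$. Recall that the $n$\nb-order can be computed on mod\nb-$n$ Moore objects: it is infinite precisely when a canonical sequence of obstruction classes $o_1,o_2,\dots$ governing the coherent vanishing of multiplication by~$n$ on the associated Moore objects is identically zero. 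Since $\Aspc$ is a compact generator and the classes $o_k$ are natural, I would first reduce to controlling these classes on the Moore object $\Aspc/n$ of the generator, the general case following because $\Aspc$ generates $\Derived(\Aspc)$ as a localizing subcategory.

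The key structural observation is that this entire divisibility tower is defined over the base. Indeed $\Aspc/n=\Aspc\wedge_\Rspc(\Rspc/n)=\Aspc\wedge S/n$, so multiplication by~$n$, together with all its higher coherences, is induced from the corresponding self\nb-maps of the mod\nb-$n$ Moore spectrum~$S/n$ in the stable homotopy category of spectra by smashing with~$\Aspc$ and using the $\Rspc$\nb-module structure. Consequently each obstruction class $o_k$ is the image, under the unit map $\mathbb S\to\Rspc\to\Aspc$, of the corresponding obstruction in stable homotopy. The latter are exactly the odd\nb-degree stable homotopy elements (for instance~$\eta$ when $n$ is even, and the $\alpha$\nb-family more generally) whose presence forces the finite $p$\nb-order of the Spanier--Whitehead category. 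Thus $o_k$ lands in an odd\nb-degree subquotient of the homotopy of the endomorphism spectrum of $\Rspc/n$, built from $\pi_*\Rspc$ and $\pi_*(\Rspc/n)$.

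It remains to see that these odd\nb-degree groups vanish, and this is where both hypotheses enter. Since $\pi_*\Rspc$ is concentrated in even degrees, the unit\nb-induced classes in $\pi_{\mathrm{odd}}\Rspc$ are zero, which kills the first obstruction. For the higher obstructions one needs the whole tower to stay even\nb-concentrated, and here torsion\nb-freeness is essential: the long exact sequence of the cofibre sequence $\Rspc\xrightarrow{n}\Rspc\to\Rspc/n$ gives a short exact sequence
\[
0\to\pi_k\Rspc/n\to\pi_k(\Rspc/n)\to(\pi_{k-1}\Rspc)[n]\to 0,
\]
and torsion\nb-freeness makes the $n$\nb-torsion term $(\pi_{k-1}\Rspc)[n]$ vanish, so that $\pi_*(\Rspc/n)\cong\pi_*\Rspc/n$ is again concentrated in even degrees. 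With $\pi_*(\Rspc/n)$ even and $\pi_{\mathrm{odd}}\Rspc=0$, every odd\nb-degree group carrying an obstruction $o_k$ is zero. Hence all obstructions vanish, the divisibility tower extends indefinitely, and the $n$\nb-order is infinite for every~$n\in\N$.

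The main obstacle is the identification in the second step: making precise that Schwede's obstruction classes for the $n$\nb-order of $\Derived(\Aspc)$ are genuinely $\Rspc$\nb-linear and arise as unit\nb-images of the odd\nb-degree obstructions in stable homotopy, rather than merely being bounded by them. This is exactly the obstruction\nb-theoretic input of Lawson and Angeltveit~\cite{Angeltveit}: one computes the relevant morphism groups $[\Aspc/n,\Aspc/n]_*$ via a universal\nb-coefficient ($\Tor$) spectral sequence over $\pi_*\Rspc$ and checks that the differentials and extension problems place each $o_k$ in the odd\nb-degree part controlled by the base. Once this placement is established, the vanishing argument above is purely a matter of the even, torsion\nb-free homotopy of~$\Rspc$.
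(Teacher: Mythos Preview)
The paper does not prove this theorem at all: it is quoted verbatim from \cite{Schwede:Top_tri_cats}*{Example~2.9} and attributed to Lawson and Angeltveit, with no argument given. So there is no ``paper's own proof'' to compare against; you have attempted considerably more than the paper does.

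That said, your sketch diverges from the actual argument in the cited sources and has a real gap. Schwede's Example~2.9 does not proceed by tracking a sequence of obstruction classes $o_k$ living in odd-degree homotopy; rather, the input from Angeltveit~\cite{Angeltveit} is that, when $\pi_*\Rspc$ is even and torsion-free, the mod-$n$ Moore object $\Rspc/n$ admits an $A_\infty$\nb-multiplication as an $\Rspc$\nb-algebra. One then uses this structured multiplication (and the resulting module structures on $M\wedge_\Rspc\Rspc/n$) to verify Schwede's inductive definition of $n$\nb-order directly. Your framework of canonical classes $o_1,o_2,\dots$ ``governing the coherent vanishing of multiplication by~$n$'' is not Schwede's definition of $n$\nb-order, and you never make precise where these classes live or why they are forced into odd degree; the assertion that each $o_k$ is the unit-image of a specific odd stable homotopy class is exactly the point that would need proof. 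You acknowledge this in your final paragraph, but invoking a Tor spectral sequence and saying the differentials ``place each $o_k$ in the odd-degree part'' is not an argument. In short: the heuristic that even, torsion-free $\pi_*\Rspc$ kills odd obstructions is correct in spirit, but the honest proof packages this as the existence of an $A_\infty$\nb-structure on $\Rspc/n$, and that is the step you have not supplied.
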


In order to apply this theorem, we need to embed our bootstrap categories into appropriate derived categories of module spectra. Theorem~\ref{thm:main} then follows because the $n$\nb-order can only increase when we pass to a triangulated subcategory. For connective $\E$\nb-theory and $G$\nb-equivariant $\E$\nb-theory we get the desired embeddings essentially for free from the literature. More specifically, we use a result from Andreas Thom's thesis in the case of connective $\E$\nb-theory and a construction of Dell'Ambrogio--Emerson--Kandelaki--Meyer in the $G$\nb-equivariant case. In both cases, Theorem~\ref{thm:lawson_angeltveit} may be applied with $\Aspc=\Rspc$. This is not surprising because the categories~$\bucat$ and~$\Ecat^G$ are monoidal.

In the case of ideal-related $\E$\nb-theory, we have to work a little harder to obtain the desired embedding. We apply the proposition with~$\Rspc$ equal to the Dell'Ambrogio--Emerson--Kandelaki--Meyer spectrum $\Kspc=\Kspc(\C)$ for the trivial group and~$\Aspc$ equal to a certain $\Kspc$\nb-algebra spectrum $\Kspc X$ which may be called the \emph{incidence algebra} over~$\Kspc$ of the partially ordered set~$X$ (a finite $T_0$\nb-space is essentially the same as a partially ordered set). A construction of this form in the special case of upper-triangular $3\times 3$\nb-matrices is indicated by Schwede in~\cite{Schwede:Morita}*{\S4.5}.

The category of $\Kspc X$\nb-module spectra is Quillen equivalent to the category of diagrams of $\Kspc$\nb-module spectra indexed by~$X$. The obtained embedding
\[
\Bootstrap_\E(X)\hookrightarrow\Derived(\Kspc X)\cong\Ho\bigl(\Mod{\Kspc}^X\bigr)
\]
is interesting in its own right: it sets the stage for Morita theory, allowing us to construct equivalences $\Bootstrap_\E(X)\cong\Bootstrap_\E(Y)$ for many pairs of finite spaces~$(X,Y)$. This will enable us to treat many spaces~$X$ at once when answering questions such as: is there a manageable homology theory on $\Bootstrap_\E(X)$ computing the $\E(X)$\nb-groups via a universal coefficient theorem? These consequences will be pursued elsewhere.

\subsection*{Acknowledgement}
I owe the idea of the construction of the embedding in \S\ref{sec:ideal-related} to Ralf Meyer. I am indebted to him and Rohan Lean for valuable discussions. Further, I would like to thank Stefan Schwede and Andreas Thom for helpful correspondence. Finally, I would like to thank the anonymous referee for valuable comments on the former version of this note.

\subsection*{Some preliminaries}

We refer to~\cite{Hovey:Model_cats} as a general reference on model categories and to \cites{Schwede:Untitled_symmetric,Schwede:Homotopy_symmetric} for the theory of symmetric spectra. Recall that if~$\Model$ is a stable model category, then its homotopy category~$\Ho(\Model)$, defined as the localization of~$\Model$ at its weak equivalences, is naturally triangulated. The stable model category of module spectra over a ring spectrum~$\Rspc$ is denoted by $\Mod{\Rspc}$; its homotopy category is called the \emph{derived category} of $\Rspc$\nb-module spectra and denoted by $\Derived(\Rspc)$. We will use several times that an $\Rspc$\nb-module map is a weak equivalence if it induces isomorphisms on stable homotopy groups. 

We write $C\inOb\mathcal C$ to denote that~$C$ is an object in a category~$\mathcal C$. The \Cstar{}algebra of complex numbers is denoted by~$\C$.

\section{Connective \texorpdfstring{\E}{E}-theory}
In his thesis~\cite{Thom:Thesis}, Andreas Thom defines \emph{connective $\E$\nb-theory} for separable \Cstar{}alge\-bras. This is the universal triangulated homology theory on separable \Cstar{}algebras satisfying matrix stability (and full excision); it is denoted by~$\bu$. The category of separable \Cstar{}alge\-bras with $\bu$\nb-groups as morphisms is denoted by~$\bucat$; it carries a triangulation structure inherited from the (\Cstar{}algebra) stable homotopy category; see \cite{Thom:Thesis}*{\S3.3 and~\S4.2}.

\begin{definition}
The \emph{Spanier--Whitehead category} $\SW_\bu\subset\bucat$ in connective $\E$\nb-the\-ory is the thick triangulated subcategory generated by the \Cstar{}algebra~$\C$.
\end{definition}

It is shown in \cite{Thom:Thesis}*{Theorem~5.1.2} that there is a triangulated functor
\[
\Kspc^H\colon\bucat\to\Derived(\buspc)
\]
inducing a graded ring isomorphism $\bu_*(\C,\C)\cong\Derived(\buspc)(\buspc,\buspc)_*$, where~$\buspc\defeq\Kspc^H(\C)$ is a commutative symmetric ring spectrum equivalent to the connective $\K$\nb-theory spectrum (see \cite{Thom:Thesis}*{Propositions~5.1.1 and~D.1.1}).

\begin{proposition}
  \label{pro:fully_faithful_bu}
The functor $\Kspc^H\colon\bucat\to\Derived(\buspc)$ is fully faithful on the Spanier--Whitehead category $\SW_\bu$.
\end{proposition}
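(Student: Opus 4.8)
The plan is to verify that $\Kspc^H$ restricted to $\SW_\bu$ is bijective on morphism groups in each degree, and then to promote this to full faithfulness on the whole thick subcategory by a standard five-lemma / localizing-subcategory argument. First I would observe that by \cite{Thom:Thesis}*{Theorem~5.1.2} the functor $\Kspc^H$ induces a graded ring isomorphism $\bu_*(\C,\C)\cong\Derived(\buspc)(\buspc,\buspc)_*$; since $\Kspc^H$ is additive and triangulated and sends $\C$ to $\buspc$, this says precisely that $\Kspc^H$ is fully faithful on the full subcategory with the single object~$\C$ (and its suspensions). The next step is to enlarge this single-object subcategory to all of $\SW_\bu$ without losing full faithfulness.

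The key step is the following bootstrapping. Let $\mathcal C\subseteq\SW_\bu$ be the full subcategory of objects~$A$ such that, for every object~$B$ in the subcategory generated by~$\C$ (equivalently, for $B=\C$ and its suspensions), the map
\[
\bu_*(B,A)\xrightarrow{\ \Kspc^H\ }\Derived(\buspc)\bigl(\Kspc^H B,\Kspc^H A\bigr)_*
\]
is an isomorphism. I would then show $\mathcal C$ is a thick triangulated subcategory of $\SW_\bu$: it is closed under suspension because $\Kspc^H$ is triangulated; it is closed under mapping cones by the five lemma applied to the long exact sequences in both variables (using that a distinguished triangle in $\bucat$ is sent to a distinguished triangle in $\Derived(\buspc)$, and that $\bu_*(B,\blank)$ and $\Derived(\buspc)(\Kspc^H B,\blank)_*$ are both homological); and it is closed under retracts since isomorphisms are detected on retracts in an additive setting. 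Since $\mathcal C$ contains~$\C$ by \cite{Thom:Thesis}*{Theorem~5.1.2} and $\SW_\bu$ is by definition the thick subcategory generated by~$\C$, we conclude $\mathcal C=\SW_\bu$. This means $\Kspc^H$ is an isomorphism on $\bu_*(\C,A)\to\Derived(\buspc)(\buspc,\Kspc^H A)_*$ for all $A\inOb\SW_\bu$. Finally I would run the same thick-subcategory argument a second time, now in the first variable: the full subcategory of objects~$B$ for which $\bu_*(B,A)\to\Derived(\buspc)(\Kspc^H B,\Kspc^H A)_*$ is an isomorphism \emph{for all} $A\inOb\SW_\bu$ is again thick (same five-lemma reasoning, using co-homological long exact sequences in the first variable) and contains~$\C$; hence it is all of $\SW_\bu$, which is exactly the claim.

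The main obstacle I anticipate is purely bookkeeping rather than conceptual: one must make sure the long exact sequences used in the five-lemma step genuinely match up under $\Kspc^H$, i.e.\ that $\Kspc^H$ carries the triangulated structure on~$\bucat$ to the one on $\Derived(\buspc)$ compatibly with suspension and with the connecting maps. This is precisely what it means for $\Kspc^H$ to be a triangulated functor, which is asserted in \cite{Thom:Thesis}*{Theorem~5.1.2}, so no extra work is needed there; one only has to cite it carefully. A minor additional point is that the thick subcategory generated by~$\C$ is closed under the relevant operations with respect to \emph{all} suspensions $\Sigma^n\C$, so that the base case of the induction really is the full graded ring isomorphism and not merely its degree-zero part; this is immediate from the graded statement in \cite{Thom:Thesis}*{Theorem~5.1.2}.
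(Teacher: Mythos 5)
Your proposal is correct and follows essentially the same argument as the paper: a two-pass thick-subcategory (five-lemma) induction, first varying the second variable against $\C$ and then varying the first variable against a fixed object of $\SW_\bu$, exactly as in \cite{Schwede:Morita}*{Proposition~3.10}. Your explicit remark about closure under retracts is a detail the paper leaves implicit but is indeed needed since $\SW_\bu$ is the \emph{thick} subcategory generated by~$\C$.
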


\begin{proof}
This is a standard argument; compare for instance \cite{Schwede:Morita}*{Proposition~3.10}. Consider the full subcategory of~$\bucat$ consisting of the objects~$B$ such that the map $\bu_*(\C,B)\to\Derived(\buspc)\bigl(\buspc,\Kspc^H(B)\bigr)_*$ is an isomorphism. This subcategory contains~$\C$ and is closed under suspension. It is also closed under exact triangles because~$\Kspc^H$ is triangulated. Hence it contains~$\SW_\bu$. A similar argument shows that, for fixed $B\inOb\SW_\bu$, the map $\bu_*(A,B)\to\Derived(\buspc)\bigl(\Kspc^H(A),\Kspc^H(B)\bigr)_*$ is bijective for all $A\inOb\SW_\bu$. Hence~$\Kspc^H$ is fully faithful on~$\SW_\bu$.
\end{proof}

\begin{proposition}
The essential image of the restriction of the functor $\Kspc^H$ to~$\SW_\bu$ is a triangulated subcategory of $\Derived(\buspc)$.
\end{proposition}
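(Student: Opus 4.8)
The plan is to check that the essential image $\mathcal E\subseteq\Derived(\buspc)$ of $\Kspc^H|_{\SW_\bu}$ satisfies the three closure conditions defining a (strictly full) triangulated subcategory: closure under isomorphism, under the translation functor and its inverse, and under cones of morphisms. The essential input is the full faithfulness of $\Kspc^H|_{\SW_\bu}$ established in Proposition~\ref{pro:fully_faithful_bu}; indeed the statement is an instance of the general fact that the essential image of a fully faithful triangulated functor out of a triangulated category is a triangulated subcategory.

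Closure under isomorphism is immediate from the definition of the essential image. For closure under the translation functor $\Sigma$ and its inverse, recall that $\Kspc^H$ is triangulated, so that $\Sigma^{\pm1}\Kspc^H(A)\cong\Kspc^H(\Sigma^{\pm1}A)$ for every $A\inOb\bucat$; since $\SW_\bu$ is a triangulated subcategory of $\bucat$ it is closed under $\Sigma^{\pm1}$, and therefore $\Sigma^{\pm1}\Kspc^H(A)\inOb\mathcal E$ whenever $A\inOb\SW_\bu$.

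The main step — and the only one requiring more than a formal check — is closure under cones. Let $f\colon X\to Y$ be a morphism of $\Derived(\buspc)$ with $X,Y\inOb\mathcal E$, and choose isomorphisms $u\colon X\xrightarrow{\sim}\Kspc^H(A)$ and $v\colon Y\xrightarrow{\sim}\Kspc^H(B)$ with $A,B\inOb\SW_\bu$. Then $vfu^{-1}\colon\Kspc^H(A)\to\Kspc^H(B)$ is, by Proposition~\ref{pro:fully_faithful_bu}, of the form $\Kspc^H(g)$ for a (unique) morphism $g\colon A\to B$ in $\SW_\bu$. Complete $g$ to an exact triangle $A\xrightarrow{\,g\,}B\to C\to\Sigma A$ inside the triangulated subcategory $\SW_\bu$, so that $C\inOb\SW_\bu$; applying the triangulated functor $\Kspc^H$ yields an exact triangle $\Kspc^H(A)\xrightarrow{\Kspc^H(g)}\Kspc^H(B)\to\Kspc^H(C)\to\Sigma\Kspc^H(A)$ in $\Derived(\buspc)$. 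Hence $\Kspc^H(C)$ is a cone of $\Kspc^H(g)=vfu^{-1}$; conjugating back by $u$ and $v$ and using that cones are unique up to isomorphism, $\Kspc^H(C)$ is also a cone of $f$. Thus every cone of $f$ is isomorphic to $\Kspc^H(C)\inOb\mathcal E$, as required.

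I expect no genuine obstacle: with Proposition~\ref{pro:fully_faithful_bu} available the verification is essentially formal. The point that calls for attention is that fullness of $\Kspc^H$ is only asserted between objects literally of the form $\Kspc^H(-)$, so in the cone step one must first transport $f$ to the morphism $vfu^{-1}$ between $\Kspc^H(A)$ and $\Kspc^H(B)$ before invoking the proposition, and then transport the resulting exact triangle back along $u$ and $v$; keeping this bookkeeping straight (and using that $\SW_\bu$, being a triangulated subcategory, indeed contains a cone of $g$) is all that is needed.
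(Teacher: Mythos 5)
Your proof is correct and follows essentially the same route as the paper's: the paper's (much terser) argument likewise reduces everything to closure under cones and obtains the cone of a morphism $f$ as the image under $\Kspc^H$ of a cone of its lift to $\SW_\bu$, which exists by the full faithfulness of Proposition~\ref{pro:fully_faithful_bu}. Your write-up merely makes explicit the bookkeeping with the isomorphisms $u$, $v$ and the closure under translation that the paper leaves implicit.
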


\begin{proof}
It suffices to prove that every morphism in the image of the restriction of~$\Kspc^H$ to~$\SW_\bu$ has a cone in the image of the restriction of~$\Kspc^H$. Such a cone can be obtained as the image of a cone of the lifting of~$f$ to~$\bucat$.
\end{proof}

Recall from \cite{Thom:Thesis}*{Theorem~5.1.2} that
\[
\pi_*\buspc
\cong\Derived(\buspc)(\buspc,\buspc)_*
\cong\bu_*(\C,\C)
\cong\Z[u],
\]
where~$u$ is of degree two. Together with the previous propositions this shows that Theorem~\ref{thm:lawson_angeltveit} may be applied to prove Theorem~\ref{thm:main} for the category~$\SW_\bu$.

\begin{remark}
We have restricted ourselves to the Spanier--Whitehead subcategory of~$\bucat$ because we do not expect~$\bucat$ to possess all countable coproducts.
\end{remark}

\section{\texorpdfstring{$G$}{G}-equivariant \texorpdfstring{\E}{E}-theory}

Let~$G$ be a compact group. A general reference for $G$\nb-equivariant $\E$\nb-theory is~\cite{Guentner-Higson-Trout:Equivariant_E}.

\begin{lemma}
  \label{lem:factorization_G}
If a functor from $\Cstarsep^G$ maps all $\E^G$\nb-equivalences to isomorphisms, then it factors through the canonical functor $\Cstarsep^G\to\Ecat^G$.
\end{lemma}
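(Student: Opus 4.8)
The content of the lemma is that the canonical functor $q\colon\Cstarsep^G\to\Ecat^G$ exhibits~$\Ecat^G$ as the localisation of~$\Cstarsep^G$ at the class~$W$ of $\E^G$\nb-equivalences, that is, of those $^*$\nb-homomorphisms that become invertible in~$\Ecat^G$. The first step is to pass to the naive homotopy category $\Ho(\Cstarsep^G)$, whose morphisms are the homotopy classes of $^*$\nb-homomorphisms. A functor~$F$ as in the statement is homotopy invariant: given a homotopy $H\colon A\to C([0,1],B)$ between $^*$\nb-homomorphisms $f_0$ and~$f_1$, the evaluations $\mathrm{ev}_0,\mathrm{ev}_1\colon C([0,1],B)\to B$ are both retractions of the inclusion $B\to C([0,1],B)$, which is a homotopy equivalence, hence an $\E^G$\nb-equivalence; so $F(\mathrm{ev}_0)$ and $F(\mathrm{ev}_1)$ are both inverse to $F$ of that inclusion and therefore equal, whence $F(f_0)=F(\mathrm{ev}_0\circ H)=F(\mathrm{ev}_1\circ H)=F(f_1)$. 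Thus~$F$, and likewise~$q$, factors through $\Ho(\Cstarsep^G)$, and it remains to exhibit~$\Ecat^G$ as the localisation of $\Ho(\Cstarsep^G)$ at~$W$.

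Two things must be assembled. First, $q$ inverts~$W$, which is immediate. Second, every morphism of~$\Ecat^G$ is a finite zig\nb-zag built from $q$\nb-images of $^*$\nb-homomorphisms and formal inverses of $q$\nb-images of elements of~$W$. For this, a morphism $A\to B$ of~$\Ecat^G$ is represented by an equivariant asymptotic morphism~$\phi$ between the suspensions and compact stabilisations $\Sigma A\otimes\Compacts$ and $\Sigma B\otimes\Compacts$ (see~\cite{Guentner-Higson-Trout:Equivariant_E}), and, choosing a bounded equicontinuous representative~$(\phi_t)$, the mapping\nb-cylinder \Cstar{}algebra
\[
E_\phi=\bigl\{(c,f)\in(\Sigma A\otimes\Compacts)\oplus C_b([1,\infty),\Sigma B\otimes\Compacts):\ \|f(t)-\phi_t(c)\|\to0\bigr\}
\]
comes with a projection $\pi\colon E_\phi\to\Sigma A\otimes\Compacts$ whose kernel is a cone --- hence an $\E^G$\nb-equivalence --- and an evaluation $\epsilon\colon E_\phi\to\Sigma B\otimes\Compacts$ at $t=1$, and one gets $[\phi]=q(\epsilon)\circ q(\pi)^{-1}$ once the suspension and the stabilisation are absorbed using that the corner embeddings and the Bott periodicity isomorphism are invertible in~$\Ecat^G$ --- the latter being itself a zig\nb-zag of the same kind, by the above construction applied to the Bott asymptotic morphism together with $\Sigma^{2}\cong\mathrm{id}$.

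The hard part --- where I expect the bulk of the work to lie --- is the converse: that two zig\nb-zags which become equal in~$\Ecat^G$ are already identified in $\Ho(\Cstarsep^G)[W^{-1}]$, equivalently that every relation in~$\Ecat^G$ between $q$\nb-images of $^*$\nb-homomorphisms is forced by the relations in $\Ho(\Cstarsep^G)$ together with the invertibility of~$W$. A naive calculus of fractions does not settle this: for instance $\Cuntz_2$ with trivial $G$\nb-action is $\E^G$\nb-equivalent to~$0$ while not being contractible, so the coincidence in~$\Ecat^G$ of the unital and the zero embedding $\C\to\Cuntz_2$ is forced by post\nb-composition with the $\E^G$\nb-equivalence $\Cuntz_2\to0$, not by any manipulation of sources. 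I would therefore follow the known proof that $\KKcat^G$ is the localisation of~$\Cstarsep^G$ at the $\KK^G$\nb-equivalences. Inverting~$W$ already makes~$F$ $C^*$\nb-stable and homotopy invariant; the decisive extra point is that the remaining relations in~$\Ecat^G$ --- those arising from asymptotic homotopies and from the Connes--Higson composition product --- can each be resolved by a mapping\nb-cylinder construction, and are therefore consequences of inverting~$W$.
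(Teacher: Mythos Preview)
Your outline is on the right track but leaves the decisive step open: you correctly identify the ``hard part'' --- that relations in~$\Ecat^G$ are already forced by inverting~$W$ --- and then only sketch how one might handle asymptotic homotopies and the composition product, without carrying it out. As written this is a gap, not a proof.

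The paper avoids this difficulty altogether by choosing a different model for the zig\nb-zag. Instead of your mapping cylinder~$E_\phi$, which depends on a chosen representative~$(\phi_t)$ and therefore forces you into a calculus of fractions, one uses the asymptotic algebra $\mathfrak{A}B=C_b\bigl([0,\infty),B\bigr)/C_0\bigl([0,\infty),B\bigr)$. An asymptotic morphism $A\to B$ is the same thing as a $^*$\nb-homomorphism $A\to\mathfrak{A}B$, and two asymptotic morphisms are equivalent if and only if the corresponding $^*$\nb-homomorphisms are homotopic; the constant inclusion $B\hookrightarrow\mathfrak{A}B$ is an $\E^G$\nb-equivalence. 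Combined with the stabilisation and suspension equivalences, this writes every $\E^G$\nb-class \emph{canonically} (up to homotopy) as a $^*$\nb-homomorphism followed by the inverse of an $\E^G$\nb-equivalence. Since~$F$ is automatically homotopy invariant, one may simply \emph{define} the factorisation on this canonical roof; well\nb-definedness is then immediate, and functoriality comes from the iterated asymptotic algebras that implement composition in~$\Ecat^G$. This is precisely Blackadar's construction in~\S25.6, which the paper observes carries over verbatim to the $G$\nb-equivariant setting.

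In short: your mapping\nb-cylinder model is choice\nb-dependent and so creates the well\nb-definedness problem you then struggle with; the asymptotic\nb-algebra model is choice\nb-free and makes that problem disappear.
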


\begin{proof}
First, we observe that every element in $\E^G_0(A,B)$ can canonically be written as the composition of a $G$\nb-equivariant \Star{}ho\-mo\-mor\-phism and the inverse of another (it is straight-forward to check that the construction in \cite{Blackadar:K-theory}*{\S25.6} goes through in the $G$\nb-equivariant case). To construct the factorization, we can thus proceed as in the proof of the universal property of $\E$\nb-theory; see \cite{Blackadar:K-theory}*{Proof of 25.6.1} for details in the non-equivariant but analogous case.
\end{proof}

Now we consider the functor~$\Kspc^G\colon\Cstarsep^G\to\Mod{\Kspc^G(\C)}$ constructed in \cite{dellAmbrogio-Emerson-Kandelaki-Meyer:Functorial_K}*{\S3.3}. The construction in the non-equivariant case appeared earlier in~\cite{Joachim:K-homology_spectra}.

\begin{proposition}
The composition
\[
\Cstarsep^G\xrightarrow{\Kspc^G}\Mod{\Kspc^G(\C)}\to\Derived\bigl(\Kspc^G(\C)\bigr)
\]
descends to a triangulated functor
\[
\Kspc^G\colon\Ecat^G\to\Derived\bigl(\Kspc^G(\C)\bigr).
\]
\end{proposition}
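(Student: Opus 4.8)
The plan is to verify that the composite functor sends $\E^G$-equivalences to isomorphisms and is compatible with suspensions and mapping cones, so that it descends along $\Cstarsep^G\to\Ecat^G$ to a triangulated functor. By Lemma~\ref{lem:factorization_G}, the first point is all that is needed for the mere existence of the factorization, so I would begin there. The functor $\Kspc^G\colon\Cstarsep^G\to\Mod{\Kspc^G(\C)}$ constructed in \cite{dellAmbrogio-Emerson-Kandelaki-Meyer:Functorial_K}*{\S3.3} has the property that the stable homotopy groups of $\Kspc^G(A)$ recover the equivariant $\K$-theory groups $\K^G_*(A)$ (this is the defining feature of that construction, generalising Joachim's non-equivariant spectrum). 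Since an $\E^G$-equivalence $A\to B$ induces isomorphisms on $\K^G_*$ — indeed on $\E^G_*(D,\blank)$ for every $D$, in particular for $D=\C$ — the induced map $\Kspc^G(A)\to\Kspc^G(B)$ is a map of $\Kspc^G(\C)$-module spectra inducing isomorphisms on stable homotopy groups, hence a weak equivalence, hence an isomorphism in $\Derived\bigl(\Kspc^G(\C)\bigr)$. Thus the composite inverts $\E^G$-equivalences, and Lemma~\ref{lem:factorization_G} yields a factorization $\Ecat^G\to\Derived\bigl(\Kspc^G(\C)\bigr)$.

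It remains to check that this descended functor is triangulated, i.e.\ that it is compatible with the suspension equivalences and carries the distinguished triangles of $\Ecat^G$ (which come from mapping-cone triangles of $G$-equivariant $^*$-homomorphisms, via the construction of the triangulation on $\Ecat^G$ analogous to \cite{Thom:Thesis}*{\S3.3 and~\S4.2}) to distinguished triangles in $\Derived\bigl(\Kspc^G(\C)\bigr)$. For the suspension: $\Kspc^G$ should turn the suspension $A\mapsto C_0(\R)\otimes A$ (or $C_0(0,1)\otimes A$) into the shift $[1]$ on module spectra up to natural isomorphism; this is again part of the structure recorded in \cite{dellAmbrogio-Emerson-Kandelaki-Meyer:Functorial_K}*{\S3.3}, reflecting Bott periodicity on homotopy groups, and I would simply cite it. For the triangles: a mapping-cone sequence $A\rightarrowtail C_f\prto SB$ in $\Cstarsep^G$ (or rather the associated mapping-cone inclusion of a $^*$-homomorphism $f\colon B\to A$) is sent by $\Kspc^G$, a functor into a stable model category of modules, to a sequence which — because $\Kspc^G$ is constructed so as to take \Cstar-algebra extensions with completely positive contractive sections, or at least mapping-cone extensions, to homotopy fibre/cofibre sequences — becomes a distinguished triangle in $\Derived\bigl(\Kspc^G(\C)\bigr)$. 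Since these mapping-cone triangles generate the triangulated structure of $\Ecat^G$, this establishes that the factorization is exact.

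The main obstacle is precisely the last point: one must know that $\Kspc^G$ converts the relevant class of $G$-equivariant \Cstar-extensions into (co)fibre sequences of spectra, and that the triangulation on $\Ecat^G$ is generated by exactly such mapping-cone triangles. Both facts are available — the former from the homotopy-invariance, stability and excision properties established for $\Kspc^G$ in \cite{dellAmbrogio-Emerson-Kandelaki-Meyer:Functorial_K}*{\S3.3}, the latter by the same reasoning that produces the triangulation on $\bucat$ in \cite{Thom:Thesis}*{\S3.3} transported to the equivariant setting and noted in \cite{Guentner-Higson-Trout:Equivariant_E} — so the proof is really an assembly of citations together with the observation, via Lemma~\ref{lem:factorization_G}, that $\E^G$-equivalences are inverted. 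Care is needed only to match up the precise model of the triangulated structure on $\Ecat^G$ with the cofibre sequences coming from $\Mod{\Kspc^G(\C)}$; once the standard dictionary between \Cstar-algebraic mapping cones and spectrum-level cofibre sequences is in place, exactness is automatic.
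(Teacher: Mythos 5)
Your proposal follows essentially the same route as the paper: reduce via Lemma~\ref{lem:factorization_G} to showing that $\E^G$\nb-equivalences become weak equivalences of module spectra (checked on stable homotopy groups), and then cite the results of \cite{dellAmbrogio-Emerson-Kandelaki-Meyer:Functorial_K} for exactness. The only point you gloss over is that the homotopy groups of $\Kspc^G(B)$ are a priori the $\KK^G$\nb-groups $\KK^G_*(\C,B)$, so one must invoke the identification $\E^G_*(\C,B)\cong\KK^G_*(\C,B)$ before concluding that an $\E^G$\nb-equivalence induces isomorphisms on them; the paper makes this step explicit.
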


\begin{proof}
By the previous lemma, it suffices to check that the functor $\Kspc^G\colon\Cstarsep^G\to\Mod{\Kspc^G(\C)}$ maps $\E^G$\nb-equivalences to weak equivalences. This is a consequence of the natural isomorphism
\[                                                                                                                                                                                                  
\Derived\bigl(\Kspc^G(\C)\bigr)\bigl(\Kspc^G(\C),\Kspc^G(B)\bigr)_*\cong\E^G_*(\C,B)
\]
following from~(3.6) and~(3.7) in~\cite{dellAmbrogio-Emerson-Kandelaki-Meyer:Functorial_K} and the identification $\E^G_*(\C,B)\cong\KK^G_*(\C,B)$. The fact that~$\Kspc^G$ is triangulated follows from \cite{dellAmbrogio-Emerson-Kandelaki-Meyer:Functorial_K}*{Remark~3.6} as in the proof of \cite{dellAmbrogio-Emerson-Kandelaki-Meyer:Functorial_K}*{Theorem~3.8}.
\end{proof}

\begin{definition}
The \emph{bootstrap category} (\emph{of the tensor unit})~$\Bootstrap_\E^G\subset\Ecat^G$ in $G$\nb-equi\-vari\-ant $\E$\nb-theory is the $\aleph_0$\nb-localizing subcategory generated by the \Cstar{}algebra~$\C$ with the trivial $G$\nb-action.
\end{definition}

\begin{remark}
Results in~\cite{Meyer-Nest:BC_Coactions} indicate that, if~$G$ is a (higher-dimensional) torus, then the class $\Bootstrap^G$ in fact provides the correct domain for a potential universal coefficient theorem.
\end{remark}

\begin{proposition}
  \label{pro:fully_faithful_EG}
The functor $\Kspc^G\colon\Ecat^G\to\Derived\bigl(\Kspc^G(\C)\bigr)$ is fully faithful on the bootstrap category~$\Bootstrap_\E^G$.
\end{proposition}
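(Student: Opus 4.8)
The plan is to run exactly the same "Bootstrap-class-generation" argument used in the proof of Proposition~\ref{pro:fully_faithful_bu}, now adapted to the $\aleph_0$\nb-localizing setting. First I would fix the object $\C$ (with trivial $G$\nb-action) and consider the full subcategory~$\mathcal S$ of~$\Ecat^G$ consisting of those $B$ for which the natural map
\[
\E^G_*(\C,B)\to\Derived\bigl(\Kspc^G(\C)\bigr)\bigl(\Kspc^G(\C),\Kspc^G(B)\bigr)_*
\]
is an isomorphism. By the natural isomorphism quoted in the previous proposition (coming from (3.6) and~(3.7) of~\cite{dellAmbrogio-Emerson-Kandelaki-Meyer:Functorial_K} together with $\E^G_*(\C,B)\cong\KK^G_*(\C,B)$), the object~$\C$ itself lies in~$\mathcal S$. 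Since $\Kspc^G$ is triangulated and $\E^G_*(\C,\blank)$ and $\Derived\bigl(\Kspc^G(\C)\bigr)\bigl(\Kspc^G(\C),\blank\bigr)_*$ are both homological functors, $\mathcal S$ is closed under suspension, desuspension and exact triangles.

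The new ingredient compared to Proposition~\ref{pro:fully_faithful_bu} is the passage from thick subcategories to $\aleph_0$\nb-localizing ones, so the key step is to check that~$\mathcal S$ is closed under countable coproducts. On the source side, $\E^G_*(\C,\blank)\cong\KK^G_*(\C,\blank)$ commutes with countable coproducts because $\C$ is a compact (that is, $\aleph_0$\nb-small) object in $\Ecat^G$ — this is the standard fact that unital separable \Cstar{}algebras are compact in $\KK$-type categories. On the target side, $\Kspc^G$ preserves coproducts (it is a left adjoint-type construction; in any case this is part of the setup in~\cite{dellAmbrogio-Emerson-Kandelaki-Meyer:Functorial_K}), and the spectrum $\Kspc^G(\C)$ is a compact generator of $\Derived\bigl(\Kspc^G(\C)\bigr)$, so $\Derived\bigl(\Kspc^G(\C)\bigr)\bigl(\Kspc^G(\C),\blank\bigr)_*$ also commutes with countable coproducts. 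Hence~$\mathcal S$ is $\aleph_0$\nb-localizing and contains~$\C$, so it contains the whole bootstrap category~$\Bootstrap_\E^G$; that is, $\Kspc^G$ induces an isomorphism $\E^G_*(\C,B)\cong\Derived\bigl(\Kspc^G(\C)\bigr)\bigl(\Kspc^G(\C),\Kspc^G(B)\bigr)_*$ for every $B\inOb\Bootstrap_\E^G$.

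For the second variable I would then fix an arbitrary $B\inOb\Bootstrap_\E^G$ and repeat the dévissage in the first variable: the full subcategory of those $A$ for which
\[
\E^G_*(A,B)\to\Derived\bigl(\Kspc^G(\C)\bigr)\bigl(\Kspc^G(A),\Kspc^G(B)\bigr)_*
\]
is an isomorphism contains~$\C$ by the previous paragraph and is closed under suspension, desuspension, exact triangles and countable coproducts — using compactness of the objects involved and that $\Kspc^G$ preserves coproducts, exactly as before. Therefore it contains~$\Bootstrap_\E^G$, and $\Kspc^G$ is fully faithful on~$\Bootstrap_\E^G$ (the grading-$0$ part gives full faithfulness, the graded statement being a bonus).

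The main obstacle I anticipate is purely bookkeeping: verifying that the functor $\Kspc^G$ genuinely commutes with countable coproducts and that $\C$ is $\aleph_0$\nb-small in $\Ecat^G$, so that the two homological functors being compared can be pushed past coproducts. Both facts are essentially in the literature ($\aleph_0$\nb-smallness of separable \Cstar{}algebras in $\KK^G$ is classical, and the coproduct-preservation of $\Kspc^G$ follows from its construction in~\cite{dellAmbrogio-Emerson-Kandelaki-Meyer:Functorial_K}), so the argument should go through without real difficulty; everything else is the by-now-standard generated-subcategory argument already used for~$\SW_\bu$.
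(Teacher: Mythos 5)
Your proposal is correct and follows essentially the same route as the paper: the standard d\'evissage along the lines of \cite{Schwede:Morita}*{Proposition~3.10}, with the only genuinely new point being closure under countable coproducts (existence of which in $\Ecat^G$ is \cite{Guentner-Higson-Trout:Equivariant_E}*{Proposition~7.1}). The one place where your justification differs is that the paper establishes the preservation of countable coproducts by $\Kspc^G$ not by an adjointness argument but directly, by noting that weak equivalences of module spectra are detected on stable homotopy groups and that $G$\nb-equivariant $\K$\nb-theory preserves countable direct sums.
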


\begin{proof}
The proof again proceeds along the lines of \cite{Schwede:Morita}*{Proposition~3.10}. We have to check that the category~$\Ecat^G$ has countable coproducts and that the functor~$\Kspc^G$ preserves them. The former is shown in \cite{Guentner-Higson-Trout:Equivariant_E}*{Proposition~7.1}. To see the latter, we must show that the canonical map
\[
\varinjlim\limits_{n\in\N}\Kspc^G(\bigoplus\limits_{k=1}^n A_k)\to\Kspc^G(\bigoplus\limits_{k=1}^\infty A_k)
\]
is a weak equivalence for every sequence of objects $A_k\inOb\Ecat^G$. Since $G$\nb-equivariant $\K$\nb-theory preserves countable direct sums, this map induces isomorphisms on stable homotopy groups and is thus a weak equivalence.
\end{proof}

As in the previous section, the essential image of the restriction of~$\Kspc^G$ to~$\Bootstrap_\E^G$ is a triangulated subcategory of $\Derived\bigl(\Kspc^G(\C)\bigr)$.

The spectrum $\Kspc^G(\C)$ is by construction a commutative symmetric ring spectrum. In order to apply Theorem~\ref{thm:lawson_angeltveit} to prove Theorem~\ref{thm:main} in the case of~$\Bootstrap_\E^G$, we need to check that the stable homotopy groups $\pi_*\Kspc^G(\C)$ are torsion-free and concentrated in even degrees. We may identify
\[
\pi_*\Kspc^G(\C)
\cong\Derived\bigl(\Kspc^G(\C)\bigr)\bigl(\Kspc^G(\C),\Kspc^G(\C)\bigr)_*
\cong\E_*^G(\C,\C)
\cong\Repring(G)\otimes\Z[\beta,\beta^{-1}].
\]
Here $\Repring(G)$ denotes the representation ring of the group~$G$ concentrated in degree zero and~$\beta$ is an invertible element of degree~$2$ (see \cite{Blackadar:K-theory}*{Proposition~20.4.4}). Recall that the underlying Abelian group of the representation ring of~$G$ is freely generated by the isomorphism classes of simple $G$\nb-modules (see for instance~\cite{Segal:Representation_ring}). In particular, $\Repring(G)$ is torsion-free.

\section{Ideal-related \texorpdfstring{\E}{E}-theory}
  \label{sec:ideal-related}

Let~$X$ be a finite $T_0$\nb-space and let $\Cstarsep(X)$ denote the category of separable \Cstar{}al\-ge\-bras over~$X$ as defined in~\cite{MN:Bootstrap}. In particular, a \Cstar{}algebra over~$X$ is a pair $(A,\psi)$ consisting of a \Cstar{}algebra~$A$ and a continuous map from the primitive ideal space of~$A$ to~$X$. Every open subset~$U$ of~$X$ naturally gives rise to an ideal~$A(U)$ in~$A$. Let~$\Ecat(X)$ denote the version of $\E$\nb-theory for \Cstar{}algebras over~$X$ defined by Dadarlat and Meyer~\cite{Dadarlat-Meyer:E_over_space}. We refer to~$\Ecat(X)$ as \emph{ideal-related $\E$\nb-theory}.

\begin{lemma}
  \label{lem:factorization}
If a functor from $\Cstarsep(X)$ maps all $\E(X)$\nb-equivalences to isomorphisms, then it factors through the canonical functor $\Cstarsep(X)\to\Ecat(X)$.
\end{lemma}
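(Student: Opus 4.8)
The plan is to mimic the universal-property argument for $\E(X)$-theory given in \cite{Dadarlat-Meyer:E_over_space}, which in turn follows Blackadar's treatment of the non-equivariant case in \cite{Blackadar:K-theory}*{\S25.6}. Suppose we are given a functor $F\colon\Cstarsep(X)\to\mathcal C$ sending every $\E(X)$-equivalence to an isomorphism. We want to produce a functor $\bar F\colon\Ecat(X)\to\mathcal C$ with $\bar F\circ\iota=F$, where $\iota\colon\Cstarsep(X)\to\Ecat(X)$ is the canonical functor.

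First I would recall the standard description of morphisms in $\Ecat(X)$: every element of $\E(X)_0(A,B)$ can be written as a ``fraction'' $g\circ\iota(\phi)^{-1}\circ\iota(\psi)$, or more precisely as a roof whose backward leg $\iota(\psi)$ is the image of an $\E(X)$-equivalence (one uses here the asymptotic-morphism model of $\E(X)$-theory, together with the fact that the relevant maps of the form $A\to A\otimes\mathbb K$, cone inclusions, and suspension isomorphisms are implemented by genuine $\ast$-homomorphisms that become $\E(X)$-equivalences — the verification that Blackadar's construction in \cite{Blackadar:K-theory}*{\S25.6} carries over to the $X$-equivariant setting is the one point that needs to be checked, and I expect it to go through routinely, exactly as Dadarlat and Meyer already used it). On objects one sets $\bar F(A)=F(A)$; on a morphism represented by the roof $A\xleftarrow{\iota(\psi)}A'\xrightarrow{h}B$ one sets $\bar F$ of it to be $F(h)\circ F(\psi)^{-1}$, which makes sense precisely because $F(\psi)$ is invertible by hypothesis.

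The bulk of the argument is then the verification that $\bar F$ is well defined and functorial: independence of the chosen representing roof (two roofs representing the same $\E(X)$-morphism are connected by a common refinement, again by the $X$-equivariant analogue of the calculus developed in \cite{Blackadar:K-theory}*{\S25.6}), compatibility with composition (one forms a pullback-type diagram of the two roofs and uses that $F$ inverts the backward legs), and the identity $\bar F\circ\iota=F$ (which is immediate, since $\iota(h)$ is represented by the trivial roof $A\xleftarrow{\mathrm{id}}A\xrightarrow{h}B$). Uniqueness of $\bar F$ is clear because $\iota$ is the identity on objects and every morphism of $\Ecat(X)$ is a composite of morphisms $\iota(h)$ and inverses $\iota(\psi)^{-1}$, on which the value of $\bar F$ is forced.

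The main obstacle is purely bookkeeping: one must confirm that the generators-and-relations presentation of $\E(X)_0(A,B)$ as a localisation of the homotopy category of $X$-equivariant asymptotic morphisms — with the explicit list of $\ast$-homomorphisms that one is allowed to invert — is genuinely available in the finite-space setting. Since $\E(X)$-theory was constructed by Dadarlat and Meyer precisely so as to satisfy such a universal property (it is by design the universal $X$-equivariant, $C^*$-stable, exact homotopy functor), this is already implicit in \cite{Dadarlat-Meyer:E_over_space}, and I would simply cite their construction together with \cite{Blackadar:K-theory}*{Proof of 25.6.1} for the formal part of the argument, noting that every step is $X$-equivariant because the ideal structure is preserved by all the maps involved.
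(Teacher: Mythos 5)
Your proposal is correct and follows essentially the same route as the paper: both rest on the fact (from Dadarlat--Meyer, Lemma~2.26) that every class in $\E_0(X;A,B)$ decomposes canonically as a $*$\nb-homomorphism over~$X$ composed with the inverse of another, and then run Blackadar's argument from \cite{Blackadar:K-theory}*{Proof of 25.6.1}. The only cosmetic difference is that the paper leans on the \emph{canonicity} of the decomposition to get well-definedness for free, whereas you spell out the roof-calculus bookkeeping explicitly.
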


\begin{proof}
By \cite{Dadarlat-Meyer:E_over_space}*{Lemma~2.26} (and its proof) every element in $\E_0(X;A,B)$ can canonically be written as the composition of a \Star{}ho\-mo\-mor\-phism over~$X$ and the inverse of another. To construct the factorization, we can then proceed again as in \cite{Blackadar:K-theory}*{Proof of 25.6.1}.
\end{proof}

We denote the smallest open neighbourhood of a point~$x$ in~$X$ by~$U_x$. We consider~$X$ as a partially ordered set by setting $x\leq y$ if and only if $U_x\supseteq U_y$. In order to make sense of diagrams indexed by~$X$, we regard~$X$ as a category with a unique morphism from~$x$ to~$y$ if and only if $x\geq y$. For a category~$\mathcal C$, the diagram category~$\mathcal C^X$ consists of all functors from~$X$ to~$\mathcal C$.

\begin{definition}
Let $D\colon\Cstarsep(X)\to\Cstarsep^X$ be the functor taking a \Cstar{}algebra~$A$ over~$X$ to the diagram~$D(A)$ in $\Cstarsep$ given by $D(A)(x)=A(U_x)$ and the ideal inclusions $D(A)(x\to y)=\bigl(A(U_x)\hookrightarrow A(U_y)\bigr)$. Let~$\Kspc\colon\Cstarsep\to\Mod{\Kspc(\C)}$ denote the functor of Dell'Ambrogio--Emerson--Kandelaki--Meyer with trivial group~$G$ (see \cite{dellAmbrogio-Emerson-Kandelaki-Meyer:Functorial_K}*{\S3.3}). In the following we abbreviate $\Kspc\defeq\Kspc(\C)$. Let
\[
\Kspc^X\colon\Cstarsep(X)\to\Mod{\Kspc}^X
\]
be the composition of~$D$ with pointwise application of~$\Kspc$.
\end{definition}

We equip the category $\Mod{\Kspc}^X$ with the stable model structure described in \cite{Hovey:Model_cats}*{Theorem~5.1.3}; the weak equivalences and fibrations are defined pointwise.

\begin{proposition}
The composition
\[
\Cstarsep(X)\xrightarrow{\Kspc^X}\Mod{\Kspc}^X\to\Ho\Bigl(\Mod{\Kspc}^X\Bigr)
\]
decends to a triangulated functor
\[
 \Kspc^X\colon\Ecat(X)\to\Ho\Bigl(\Mod{\Kspc}^X\Bigr).
\]
\end{proposition}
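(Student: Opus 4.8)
The plan is to invoke Lemma~\ref{lem:factorization}: it suffices to check that the functor $\Kspc^X\colon\Cstarsep(X)\to\Mod{\Kspc}^X$ sends $\E(X)$\nb-equivalences to weak equivalences in the pointwise stable model structure on $\Mod{\Kspc}^X$, and that the resulting functor on homotopy categories is triangulated. For the first point, recall that $\Kspc^X$ is by definition the composite of the diagram functor $D$ with pointwise application of the Dell'Ambrogio--Emerson--Kandelaki--Meyer functor $\Kspc$. Since weak equivalences in $\Mod{\Kspc}^X$ are detected pointwise, I would fix a point $x\in X$ and observe that the $x$\nb-component of $\Kspc^X$ is $A\mapsto\Kspc\bigl(A(U_x)\bigr)$. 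Thus it is enough to know that $A\mapsto A(U_x)$ sends $\E(X)$\nb-equivalences to $\E$\nb-equivalences of \Cstar{}algebras, and that $\Kspc$ sends $\E$\nb-equivalences to weak equivalences.

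The first of these facts is essentially the definition of $\E(X)$\nb-equivalence together with the fact that the restriction functors $A\mapsto A(U)$ to open subsets are part of the structure preserved by $\E(X)$\nb-theory; more precisely, an $\E(X)$\nb-equivalence induces isomorphisms on $\KK(Y;\,\blank,\,\blank)$ and in particular on $\K$\nb-theory of all the subquotients $A(U_x)$, and such a map of separable \Cstar{}algebras inducing an isomorphism on $\K$\nb-theory (compatibly) is an $\E$\nb-equivalence — alternatively one cites \cite{Dadarlat-Meyer:E_over_space} directly for the statement that the fibre functors $\Cstarsep(X)\to\Cstarsep$, $A\mapsto A(U_x)$, descend to $\Ecat(X)\to\Ecat$. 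The second fact, that $\Kspc$ carries $\E$\nb-equivalences to weak equivalences, is exactly the non-equivariant case of the proposition in the previous section: it follows from the natural isomorphism $\Derived(\Kspc)\bigl(\Kspc,\Kspc(B)\bigr)_*\cong\E_*(\C,B)$ obtained from (3.6) and (3.7) of \cite{dellAmbrogio-Emerson-Kandelaki-Meyer:Functorial_K} together with $\E_*(\C,B)\cong\KK_*(\C,B)$. Combining the two, $\Kspc^X$ sends $\E(X)$\nb-equivalences to pointwise weak equivalences, so Lemma~\ref{lem:factorization} yields the desired factorization $\Ecat(X)\to\Ho\bigl(\Mod{\Kspc}^X\bigr)$.

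It remains to see that this functor is triangulated. Here I would argue as in the proof of \cite{dellAmbrogio-Emerson-Kandelaki-Meyer:Functorial_K}*{Theorem~3.8}: the functor $\Kspc$ is compatible with mapping cones up to natural weak equivalence (see \cite{dellAmbrogio-Emerson-Kandelaki-Meyer:Functorial_K}*{Remark~3.6}), and an extension of \Cstar{}algebras over~$X$ restricts, for each~$x$, to an extension $A(U_x)\into B(U_x)\prto C(U_x)$ of \Cstar{}algebras (the functors $A\mapsto A(U_x)$ are exact). Applying $\Kspc$ pointwise therefore turns the distinguished triangle in $\Ecat(X)$ associated to such an extension into a pointwise cofibre sequence of $\Kspc$\nb-module spectra, which is a distinguished triangle in the triangulated homotopy category $\Ho\bigl(\Mod{\Kspc}^X\bigr)$. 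Compatibility with suspension is immediate since both $\Ecat(X)$ and the spectrum level use the same suspension.

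The main obstacle I anticipate is the bookkeeping in the triangulatedness step: one must match the triangulated structure on $\Ecat(X)$ — coming from mapping cones of \Star{}ho\-mo\-mor\-phisms over~$X$ — with the pointwise cofibre sequences in $\Mod{\Kspc}^X$, and verify that the pointwise model structure on $\Mod{\Kspc}^X$ is indeed stable with the suspension inherited from $\Mod{\Kspc}$. Once one knows (from \cite{Hovey:Model_cats}*{Theorem~5.1.3}) that this diagram model structure is stable and that its homotopy category is $\Ho\bigl(\Mod{\Kspc}\bigr)^X$ with objectwise triangulation, the argument reduces to the non-equivariant content already used above, applied one point at a time.
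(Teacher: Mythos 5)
Your proposal is correct and follows essentially the same route as the paper: the paper's proof likewise observes that an $\E(X)$\nb-equivalence is in particular a pointwise $\E$\nb-equivalence, hence goes to a pointwise (and therefore genuine) weak equivalence in $\Mod{\Kspc}^X$, invokes Lemma~\ref{lem:factorization} for the factorization, and cites \cite{dellAmbrogio-Emerson-Kandelaki-Meyer:Functorial_K}*{Remark~3.6} for triangulatedness. Your version merely fills in more of the pointwise bookkeeping than the paper's terse proof does.
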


\begin{proof}
Every $\E(X)$\nb-equivalence is in particular a pointwise $\E$\nb-equivalence. Hence it is taken to a pointwise weak equivalence in $\Mod{\Kspc}^X$. By definition, every pointwise weak equivalence in $\Mod{\Kspc}^X$ is a weak equivalence and thus becomes invertible in $\Ho\Bigl(\Mod{\Kspc}^X\Bigr)$. The existence of the factorization now follows from Lemma~\ref{lem:factorization}. The fact that the induced functor~$\Kspc^X$ is triangulated is a consequence of the observations in \cite{dellAmbrogio-Emerson-Kandelaki-Meyer:Functorial_K}*{Remark~3.6}.
\end{proof}

For $x\in X$, we let $i_x\C$ denote the \Cstar{}algebra of complex numbers together with the map taking its unique primitive ideal to the point~$x\in X$. This is an object in $\Cstarsep(X)$. We set $\Repr=\bigoplus_{x\in X}i_x\C$.

Let~$\Kspc X$ denote the endomorphism ring spectrum of a stably fibrant approximation of $\Kspc^X(\Repr)$. We call this symmetric (non-commutative) $\Kspc$-algebra spectrum the \emph{indicence algebra} of~$X$ over~$\Kspc$. This construction is motivated by \cite{Schwede:Morita}*{Example~4.5(2)}. By \cite{Schwede:Morita}*{Theorem~4.16}, there is a Quillen equivalence between $\Mod{\Kspc}^X$ and $\Mod{\Kspc X}$. In particular, we will henceforth identify $\Ho\Bigl(\Mod{\Kspc}^X\Bigr)$ with $\Derived(\Kspc X)$. We have
\[
\Kspc^X(i_x\C)(y)=
\begin{cases}
\Kspc & \textup{for $y\leq x$} \\
* & \textup{else}
\end{cases}
\]
with all maps between non-trivial entries being identities. This yields a natural identification $\Hom\bigl(\Kspc^X(i_x\C),M\bigr)\cong M(x)$ for every $x\in X$ and $M\inOb\Mod{\Kspc}^X$. The corresponding relation in $\E$\nb-theory is the adjunction $\E(X;i_x\C,B)\cong\E\bigl(\C,B(U_x)\bigr)$ from \cite{Dadarlat-Meyer:E_over_space}*{(4.3)}. It follows that the graded ring homomorphism from $\E_*(X,\Repr,\Repr)$ to $\Derived(\Kspc X)\bigl(\Kspc^X(\Repr),\Kspc^X(\Repr)\bigr)_*$ induced by the functor~$\Kspc^X$ is an isomorphism and that both graded rings are isomorphic to the tensor product of the (ungraded) integral incidence algebra~$\Z X$ with the ring of Laurent polynomials $\Z[\beta,\beta^{-1}]$, where~$\beta$ has degree~$2$. The incidence algebra~$\Z X$ is the category ring of the universal pre-additive category generated by the category~$X$.

\begin{definition}
The bootstrap category $\Bootstrap_\E(X)\subset\Ecat(X)$ is the $\aleph_0$\nb-localizing subcategory generated by the object~$\Repr$.
\end{definition}

\begin{proposition}
The functor $\Kspc^X\colon\Ecat(X)\to\Derived(\Kspc X)$ is fully faithful on the bootstrap category~$\Bootstrap_\E(X)$.
\end{proposition}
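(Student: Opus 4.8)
The plan is to follow the same pattern as in the proofs of Propositions~\ref{pro:fully_faithful_bu} and~\ref{pro:fully_faithful_EG}, namely the standard devissage argument from \cite{Schwede:Morita}*{Proposition~3.10}. The target category $\Derived(\Kspc X)$ has all coproducts, and in particular countable ones, because it is the homotopy category of a stable model category of module spectra; moreover $\Ecat(X)$ has countable coproducts by \cite{Dadarlat-Meyer:E_over_space} (countable direct sums of separable \Cstar{}algebras over~$X$ exist and are preserved by the canonical functor). So the key preliminary point to nail down is that the functor $\Kspc^X$ preserves countable coproducts: for a sequence $A_k\inOb\Cstarsep(X)$, the canonical map $\varinjlim_n \Kspc^X\bigl(\bigoplus_{k=1}^n A_k\bigr)\to\Kspc^X\bigl(\bigoplus_{k=1}^\infty A_k\bigr)$ should be a weak equivalence. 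Since weak equivalences in $\Mod{\Kspc}^X$ are detected pointwise and the functor $D$ sends a countable direct sum over~$X$ to the pointwise countable direct sum of the values $A_k(U_x)$, this reduces to the non-equivariant statement that $\Kspc\colon\Cstarsep\to\Mod{\Kspc}$ preserves countable coproducts, which in turn follows from the fact that topological $\K$\nb-theory commutes with countable inductive limits with injective structure maps (equivalently countable direct sums), exactly as in the proof of Proposition~\ref{pro:fully_faithful_EG}.

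With that in hand, I would run the two-variable devissage. First fix the object $\Repr$ in the source slot: consider the full subcategory of $\Ecat(X)$ consisting of those $B$ for which the natural map
\[
\E_*(X;\Repr,B)\to\Derived(\Kspc X)\bigl(\Kspc^X(\Repr),\Kspc^X(B)\bigr)_*
\]
is an isomorphism. This subcategory contains~$\Repr$ by the computation already recorded before the definition of $\Bootstrap_\E(X)$ (the induced map on $\E_*(X;\Repr,\Repr)$ is an isomorphism onto $\Derived(\Kspc X)(\Kspc^X\Repr,\Kspc^X\Repr)_*\cong\Z X\otimes\Z[\beta,\beta^{-1}]$); it is closed under suspensions and exact triangles because $\Kspc^X$ is triangulated and both $\E_*(X;\Repr,\blank)$ and the mapping-group functor on the target are homological; and it is closed under countable coproducts because $\Kspc^X$ preserves them and $\Kspc^X(\Repr)$ is a compact object of $\Derived(\Kspc X)$ — indeed $\Kspc^X(\Repr)=\bigoplus_{x\in X}\Kspc^X(i_x\C)$, and each $\Kspc^X(i_x\C)$ corepresents the (homotopy-invariant) evaluation functor $M\mapsto M(x)$ by the identification $\Hom\bigl(\Kspc^X(i_x\C),M\bigr)\cong M(x)$, whence mapping out of it commutes with coproducts. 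Therefore this subcategory is $\aleph_0$\nb-localizing and contains the generator~$\Repr$, so it contains all of $\Bootstrap_\E(X)$.

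For the second variable, fix $B\inOb\Bootstrap_\E(X)$ and consider the full subcategory of $\Ecat(X)$ of those $A$ for which $\E_*(X;A,B)\to\Derived(\Kspc X)\bigl(\Kspc^X(A),\Kspc^X(B)\bigr)_*$ is an isomorphism. By the first step it contains~$\Repr$; it is again triangulated and closed under countable coproducts (here one uses that $\E_*(X;\blank,B)$ and $\Derived(\Kspc X)(\blank,\Kspc^X(B))_*$ both turn coproducts in the first slot into products, together with the coproduct-preservation of $\Kspc^X$); hence it contains $\Bootstrap_\E(X)$, which is exactly the assertion of full faithfulness. The main obstacle I anticipate is not conceptual but bookkeeping: verifying cleanly that $\Kspc^X(\Repr)$ is compact in $\Derived(\Kspc X)$ and that $\Kspc^X$ preserves countable coproducts, since both hinge on compatibility of the Dell'Ambrogio--Emerson--Kandelaki--Meyer functor $\Kspc$ with countable direct sums at the spectrum level and on the pointwise nature of the model structure on $\Mod{\Kspc}^X$; once those are granted, the devissage is formal. (Just as in the previous sections, it then follows that the essential image of the restriction of $\Kspc^X$ to $\Bootstrap_\E(X)$ is a triangulated subcategory of $\Derived(\Kspc X)$.)
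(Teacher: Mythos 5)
Your proposal is correct and follows exactly the route the paper takes: the paper's proof consists of the remark that the argument is analogous to that of Proposition~\ref{pro:fully_faithful_EG} (i.e.\ the devissage of \cite{Schwede:Morita}*{Proposition~3.10}), with the key input being that ideal-related $\K$\nb-theory preserves inductive limits, which is precisely your reduction of coproduct-preservation of $\Kspc^X$ to the pointwise statement. You merely spell out details (compactness of $\Kspc^X(\Repr)$, the two-variable devissage) that the paper leaves implicit.
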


\begin{proof}
The proof is essentially analogous to the one of Proposition~\ref{pro:fully_faithful_EG}. We use the fact that ideal-related $\K$\nb-theory preserves countable inductive limits.
\end{proof}

As before, the essential image of the restriction of~$\Kspc^X$ to~$\Bootstrap_\E(X)$ is a triangulated subcategory of $\Derived(\Kspc X)$ and the computation
\[
\pi_*\Kspc X
\cong\Derived(\Kspc X)\bigl(\Kspc^X(\Repr),\Kspc^X(\Repr)\bigr)_*
\cong\E_*(X;\Repr,\Repr)
\cong\Z X\otimes\Z[\beta,\beta^{-1}]
\]
with $\Z X$ concentrated in degree zero and~$\beta$ of degree~$2$ shows that we can apply Theorem~\ref{thm:lawson_angeltveit} to prove Theorem~\ref{thm:main} in the case of~$\Bootstrap_\E(X)$.

\section{Conclusion}

We have shown that certain triangulated categories related to \Cstar{}algebras have infinite $n$\nb-order for every $n\in\N$ by relating them with certain ring spectra. This means that they share many structural properties of algebraic triangulated categories, but it is not clear whether they are actually algebraic. To conclude, we mention that in the specific case of the derived category of $\Kspc$\nb-module spectra there is an equivalence to an algebraic triangulated category, but it is not known whether this equivalence is triangulated; see \cite{Patchkoria:Alg_Class}*{Theorem~3.1.4.(ii)}.

\begin{bibsection}
  \begin{biblist}

\bib{Angeltveit}{article}{
   author={Angeltveit, Vigleik},
   title={Topological Hochschild homology and cohomology of $A_\infty$
   ring spectra},
   journal={Geom. Topol.},
   volume={12},
   date={2008},
   number={2},
   pages={987--1032},
   issn={1465-3060},
   review={\MRref{2403804}{ (2009e:55012)}},
   doi={10.2140/gt.2008.12.987},
}

\bib{Blackadar:K-theory}{book}{
  author={Blackadar, Bruce},
  title={\(K\)\nobreakdash-Theory for Operator Algebras},
  series={Mathematical Sciences Research Institute Publications},
  volume={5},
  edition={2},
  publisher={Cambridge University Press},
  place={Cambridge},
  date={1998},
  pages={xx+300},
  isbn={0-521-63532-2},
  review={\MRref{1656031}{99g:46104}},
}

\bib{Dadarlat-Meyer:E_over_space}{article}{
  author={D\u{a}d\u{a}rlat, Marius},
  author={Meyer, Ralf},
  title={E\nobreakdash-Theory for $\textup C^*$\nobreakdash-algebras over topological spaces},
  journal={J. Funct. Anal.},
  issn={0022-1236},
  date={2012},
  review={\MRref{2920847}{}},
  doi={10.1016/j.jfa.2012.03.022},
}

\bib{dellAmbrogio-Emerson-Kandelaki-Meyer:Functorial_K}{article}{
  author={Dell'Ambrogio, Ivo},
  author={Emerson, Heath},
  author={Kandelaki, Tamaz},
  author={Meyer, Ralf},
  title={A functorial equivariant K\nobreakdash-theory spectrum and an equivariant Lefschetz formula},
  date={2011},
  note={\arxiv{1104.3441}},
}

\bib{Guentner-Higson-Trout:Equivariant_E}{article}{
  author={Guentner, Erik},
  author={Higson, Nigel},
  author={Trout, Jody},
  title={Equivariant $E$\nobreakdash-theory for $C^*$\nobreakdash-algebras},
  journal={Mem. Amer. Math. Soc.},
  volume={148},
  date={2000},
  number={703},
  pages={viii+86},
  issn={0065-9266},
  review={\MRref{1711324}{2001c:46124}},
}

\bib{Hovey:Model_cats}{book}{
  author={Hovey, Mark},
  title={Model categories},
  series={Mathematical Surveys and Monographs},
  volume={63},
  publisher={American Mathematical Society},
  place={Providence, RI},
  date={1999},
  pages={xii+209},
  isbn={0-8218-1359-5},
  review={\MRref{1650134}{99h:55031}},
}

\bib{Joachim:K-homology_spectra}{article}{
  author={Joachim, Michael},
  title={$K$-homology of $C^*$-categories and symmetric spectra
representing $K$-homology},
  journal={Math. Ann.},
  volume={327},
  date={2003},
  number={4},
  pages={641--670},
  issn={0025-5831},
  review={\MRref{2023312}{2004m:46156}},
  doi={10.1007/s00208-003-0426-9},
}

\bib{Meyer-Nest:BC_Coactions}{article}{
  author={Meyer, Ralf},
  author={Nest, Ryszard},
  title={An analogue of the Baum--Connes isomorphism for coactions of compact groups},
  journal={Math. Scand.},
  volume={100},
  date={2007},
  number={2},
  pages={301--316},
  issn={0025-5521},
  review={\MRref{2339371}{2008g:19005}},
}

\bib{MN:Bootstrap}{article}{
  author={Meyer, Ralf},
  author={Nest, Ryszard},
  title={$C^*$-algebras over topological spaces: the bootstrap class},
  journal={M\"unster J. Math.},
  volume={2},
  date={2009},
  pages={215--252},
  issn={1867-5778},
  review={\MRref {2545613}{}},
 }

\bib{Patchkoria:Alg_Class}{article}{
  author={Patchkoria, Irakli},
  title={On the Algebraic Classification of Module Spectra},
  journal={Algebr. Geom. Topol.},
  volume={12},
  date={2012},
  pages={2329--2388},
}
  
\bib{Segal:Representation_ring}{article}{
  author={Segal, Graeme},
  title={The representation ring of a compact Lie group},
  journal={Inst. Hautes \'Etudes Sci. Publ. Math.},
  number={34},
  date={1968},
  pages={113--128},
  issn={0073-8301},
  review={\MRref{0248277}{40\,\#1529}},
  note={\url{http://www.numdam.org/item?id=PMIHES_1968__34__113_0}},
}

\bib{Schochet:Top4}{article}{
   author={Schochet, Claude},
   title={Topological methods for $C^{\ast} $-algebras. IV. Mod $p$
   homology},
   journal={Pacific J. Math.},
   volume={114},
   date={1984},
   number={2},
   pages={447--468},
   issn={0030-8730},
   review={\MR{757511 (86g:46103)}},
}

\bib{Schwede:Untitled_symmetric}{article}{
  author={Schwede, Stefan},
  title={An untitled book project about symmetric spectra},
  date={2007},
  eprint={http://www.math.uni-bonn.de/~schwede/SymSpec.pdf},
}

\bib{Schwede:Homotopy_symmetric}{article}{
  author={Schwede, Stefan},
  title={On the homotopy groups of symmetric spectra},
  journal={Geom. Topol.},
  volume={12},
  date={2008},
  number={3},
  pages={1313--1344},
  issn={1465-3060},
  review={\MRref{2421129}{2009c:55006}},
  doi={10.2140/gt.2008.12.1313},
}

\bib{Schwede:Morita}{article}{
   author={Schwede, Stefan},
   title={Morita theory in abelian, derived and stable model categories},
   conference={
      title={Structured ring spectra},
   },
   book={
      series={London Math. Soc. Lecture Note Ser.},
      volume={315},
      publisher={Cambridge Univ. Press},
      place={Cambridge},
   },
   date={2004},
   pages={33--86},
   review={\MRref{2122154}{ (2005m:18015)}},
   doi={10.1017/CBO9780511529955.005},
}

\bib{Schwede:Alg_vs_top}{article}{
   author={Schwede, Stefan},
   title={Algebraic versus topological triangulated categories},
   conference={
      title={Triangulated categories},
   },
   book={
      series={London Math. Soc. Lecture Note Ser.},
      volume={375},
      publisher={Cambridge Univ. Press},
      place={Cambridge},
   },
   date={2010},
   pages={389--407},
   review={\MRref{2681714 }{(2012i:18012)}},
   doi={10.1017/CBO9781139107075.010},
}

\bib{Schwede:Top_tri_cats}{article}{
  author={Schwede, Stefan},
  title={Topological triangulated categories},
  eprint = {arXiv:math/1201.0899},
  year = {2012},
}

\bib{Thom:Thesis}{thesis}{
  author={Thom, Andreas Berthold},
  title={Connective $\E$\nb-theory and bivariant homology for \Cstar{}algebras},
  date={2003},
  institution={Westf. Wilhelms-Universit\"at M\"unster},
  type={phdthesis},
}

  \end{biblist}
\end{bibsection}

\end{document}